\documentclass[11pt,a4paper]{amsart}
\usepackage{amsthm,amsfonts,amsfonts,amsmath,amssymb}
\usepackage[colorlinks=true,citecolor=black,linkcolor=black,urlcolor=blue]{hyperref}
\usepackage{iwona} 
\usepackage[numbers,sort&compress]{natbib}
\usepackage[lmargin=33mm,rmargin=33mm,tmargin=33mm,bmargin=33mm]{geometry}

\renewcommand{\baselinestretch}{1.1}
\setlength{\footnotesep}{\baselinestretch\footnotesep}
	
\setlength{\parindent}{0cm}
\setlength{\parskip}{2ex}
\allowdisplaybreaks

\renewcommand{\geq}{\geqslant}
\renewcommand{\leq}{\leqslant}

\theoremstyle{plain}
\newtheorem{theorem}{Theorem}
\newtheorem{proposition}[theorem]{Proposition}
\newtheorem{conjecture}[theorem]{Conjecture}
\newtheorem{lemma}[theorem]{Lemma}

\newcommand{\floor}[1]{\ensuremath{\protect\lfloor#1\rfloor}}
\DeclareMathOperator{\polylog}{polylog}
\DeclareMathOperator{\GP}{GP}
\DeclareMathOperator{\Z}{\mathbb{Z}}

\begin{document}

\title{On the general position subset selection problem}
\date{\today}

\author[]{Michael~S.~Payne}
\author[]{David~R.~Wood}

\address{
\newline Department of Mathematics and Statistics, 
\newline The University of Melbourne
\newline Melbourne, Australia}
\email{m.payne3@pgrad.unimelb.edu.au}
\email{woodd@unimelb.edu.au}

\thanks{Michael Payne is supported by an Australian Postgraduate Award from the Australian Government. 
David Wood is supported by a QEII Research Fellowship from the Australian Research Council.}

\begin{abstract}
Let $f(n,\ell)$ be the maximum integer such that every set of $n$ points in the plane with at most $\ell$ collinear contains a subset of $f(n,\ell)$ points with no three collinear. 
First we prove that if $\ell \leq O(\sqrt{n})$ then  $f(n,\ell)\geq \Omega(\sqrt{\frac{n}{\ln \ell}})$. 
Second we prove that if $\ell \leq O(n^{(1-\epsilon)/2})$ then $f(n,\ell) \geq \Omega(\sqrt{n\log_\ell n})$, which implies all previously known lower bounds on $f(n,\ell)$ and improves them when $\ell$ is not fixed.
A more general problem is to consider subsets with at most $k$ collinear points in a point set with at most $\ell$ collinear. We also prove analogous results in this setting.
\end{abstract}

\maketitle

\section{Introduction}

A set of points in the plane is in \emph{general position} if it contains no three collinear points.
The general position subset selection problem asks, given a finite set of points in the plane with at most $\ell$ collinear, how big is the largest subset in general position? 
That is, determine the maximum integer $f(n,\ell)$ such that every set of $n$ points in the plane with at most $\ell$ collinear contains a subset of $f(n,\ell)$ points in general position. Throughout this paper we assume $\ell \geq 3$.

The problem was originally posed by Erd\H os, first for the case $\ell=3$~\cite{Erdos86}, and later in a more general form~\cite{Erdos88}.
F\"uredi~\cite{furedi} showed that the density version of the Hales--Jewett theorem~\cite{furstkatz} implies that $f(n,\ell)\leq o(n),$ and that a result of Phelps and R\"odl~\cite{phelprod} on independent sets in partial Steiner triple systems implies that $$f(n,3) \geq \Omega(\sqrt{n\ln n}).$$
Until recently, the best known lower bound for $\ell\geq 4$ was $f(n,\ell)\geq\sqrt{2n/(\ell-2)}$, proved by a greedy selection algorithm. 
Lefmann~\cite{lefmann} showed that for fixed $\ell$, $$f(n,\ell) \geq \Omega_\ell(\sqrt{n\ln n}).$$ 
(In fact, his results are more general, see Section~\ref{gens}.)

In relation to the general position subset selection problem (and its relatives), Bra\ss, Moser and Pach~\cite[page 318]{BMP} write, ``To make any further progress, one needs to explore the geometric structure of the problem.'' 
We do this by using the Szemeredi--Trotter theorem \cite{szemtrot}. 

We give improved lower bounds on $f(n,\ell)$ when $\ell$ is not fixed, with the improvement being most significant for values of $\ell$ around $\sqrt{n}$.
Our first result (Theorem~\ref{mainthm}) says that if $\ell \leq O(\sqrt{n})$ then  $f(n,\ell)\geq \Omega(\sqrt{\frac{n}{\ln \ell}})$. 
Our second result (Theorem~\ref{smallell}) says that if $\ell \leq O(n^{(1-\epsilon)/2})$ then $f(n,\ell) \geq \Omega(\sqrt{n\log_\ell n})$. Note that for fixed $\ell$, this implies the previously known lower bound on $f(n,\ell)$.

In Section~\ref{gens} we consider a natural generalisation of the general position subset selection problem. Given $k<\ell$, Erd\H os~\cite{Erdos88} asked for the maximum integer $f(n,\ell,k)$ such that every set of $n$ points in the plane with at most $\ell$ collinear contains a subset of $f(n,\ell,k)$ points with at most $k$ collinear. Thus $f(n,\ell) = f(n,\ell,2)$. We prove results similar to Theorems~\ref{mainthm} and~\ref{smallell} in this setting too.

\section{Results}

Our main tool is the following lemma.
\begin{lemma}\label{triples}
Let $P$ be a set of $n$ points in the plane with at most $\ell$ collinear.
Then the number of collinear triples in $P$ is at most $c(n^2\ln\ell +\ell^2n)$ for some constant $c$.
\end{lemma}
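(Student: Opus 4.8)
The plan is to apply the Szemer\'edi--Trotter theorem in the standard way, via its corollary bounding the number of ``rich'' lines, combined with a dyadic decomposition of the lines of $P$ according to how many points of $P$ they contain.

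First I would recall the consequence of the Szemer\'edi--Trotter theorem that for every $j\geq 2$, the number of lines containing at least $j$ points of $P$ is $O(n^2/j^3 + n/j)$. This follows by applying the incidence bound to $P$ together with the set $\mathcal L$ of such lines and using that $\mathcal L$ accounts for at least $j\,|\mathcal L|$ incidences; for values of $j$ below the relevant absolute constant one instead uses the trivial bound $O(n^2)$ on the number of lines through at least two points of $P$. Next, observe that the number of collinear triples in $P$ equals $\sum_{L}\binom{|L\cap P|}{3}$, the sum being over all lines $L$, and that since at most $\ell$ points of $P$ are collinear, only lines $L$ with $2\le |L\cap P|\le \ell$ contribute.

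I would then partition the contributing lines into $O(\log\ell)$ classes $\mathcal L_0,\mathcal L_1,\dots$, where $\mathcal L_s$ consists of those $L$ with $2^s\le |L\cap P|<2^{s+1}$. By the rich-lines corollary, $|\mathcal L_s| = O(n^2/2^{3s} + n/2^s)$, while each line in $\mathcal L_s$ contributes at most $\binom{2^{s+1}}{3}=O(2^{3s})$ triples; hence the total contribution of $\mathcal L_s$ is $O(n^2 + n\cdot 2^{2s})$. Summing over $s$ from $0$ to $\lceil\log_2\ell\rceil$, the first term yields $O(n^2\log\ell)$, and the second term is a geometric series dominated by its last term $\Theta(\ell^2)$, contributing $O(\ell^2 n)$. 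Converting $\log$ to $\ln$ and absorbing constants gives the claimed bound $c(n^2\ln\ell + \ell^2 n)$.

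I do not anticipate a serious obstacle. The two points that require care are deriving the rich-lines corollary cleanly from the incidence bound (in particular handling small $j$ separately so the $+n+|\mathcal L|$ terms of the incidence bound can be absorbed), and verifying that the geometric sum $\sum_s 2^{2s}$ in the final step genuinely collapses to a single $\ell^2 n$ term rather than picking up a spurious logarithmic factor. Getting the dyadic bookkeeping exactly right so that the $n^2$ part loses only a $\log\ell$ factor is the crux of the argument.
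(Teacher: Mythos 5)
Your proposal is correct and takes essentially the same approach as the paper: both bound the triple count $\sum_{L}\binom{|L\cap P|}{3}$ using the Szemer\'edi--Trotter corollary that the number of lines with at least $j$ points is $O(n^2/j^3+n/j)$, the only difference being that the paper sums these tail bounds directly (weighting by $i^2$ via summation by parts over the exact counts $s_i$) rather than grouping lines into dyadic richness classes. This is purely a bookkeeping difference, and your dyadic version cleanly yields the same $c(n^2\ln\ell+\ell^2 n)$ bound.
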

\begin{proof}
For $2\leq i\leq\ell$, let $s_i$ be the number of lines containing exactly $i$ points in $P$. 
A well-known corollary of the Szemeredi--Trotter theorem \cite{szemtrot} states that for some constant $c\geq 1$, for all $i\geq 2$,
  $$\sum_{j\geq i}s_j \leq c\left(\frac{n^2}{i^3}+\frac{n}{i}\right).$$
Thus the number of collinear triples is
$$\sum_{i=2}^{\ell} \binom{i}{3} s_i 
 \leq
\sum_{i=2}^{\ell} i^2\sum_{j=i}^{\ell}s_j
 \leq
\sum_{i=2}^{\ell} ci^2 \left(\frac{n^2}{i^3}+\frac{n}{i}\right)
 \leq
c\sum_{i=2}^{\ell} \left(\frac{n^2}{i}+in\right)
 \leq
c(n^2\ln\ell + \ell^2n).$$
%
\end{proof}

Note that Lefmann~\cite{Lefmann-EuJC}  proved Lemma~\ref{triples} for the case of the $\sqrt{n}\times\sqrt{n}$ grid via a direct counting argument.

To apply Lemma~\ref{triples} it is useful to consider the $3$-uniform hypergraph $H(P)$ determined by a set of points $P$, with vertex set $P$, and an edge for each collinear triple in $P$. A subset of $P$ is in general position if and only if it is an independent set in $H(P)$. The size of the largest independent set in a hypergraph $H$ is denoted $\alpha(H)$.
Spencer~\cite{Spencer} proved the following lower bound on $\alpha(H)$. 
\begin{lemma}[Spencer~\cite{Spencer}]\label{sudlem}
Let $H$ be an $r$-uniform hypergraph with $n$ vertices and $m$ edges. 
If $m<n/r$ then $\alpha(H)>n/2$.
If $m\geq n/r$ then $$\alpha(H) > \frac{r-1}{r^{r/(r-1)}\,
}\frac{n}{(m/n)^{1/(r-1)}}.$$
\end{lemma}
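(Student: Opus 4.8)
The plan is to use the \emph{deletion} (alteration) method from the probabilistic toolkit. First I would fix a parameter $p\in[0,1]$, to be optimised later, and form a random set $S\subseteq V(H)$ by placing each of the $n$ vertices into $S$ independently with probability $p$, so that $\mathbb{E}[\,|S|\,]=pn$. Since every edge of $H$ consists of exactly $r$ distinct vertices, a fixed edge lies entirely inside $S$ with probability $p^r$; hence, writing $X$ for the number of edges of $H$ contained in $S$, linearity of expectation gives $\mathbb{E}[X]=p^rm$.

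Next I would repair $S$ into an independent set by brute force: for each edge contained in $S$ delete one of its vertices from $S$, and call the resulting set $S'$. Then every edge of $H$ has a vertex outside $S'$, so $S'$ is independent, and since at most $X$ vertices were removed we have $|S'|\ge |S|-X$. Taking expectations, $\mathbb{E}[\,|S'|\,]\ge pn-p^rm$; as $S'$ is independent for every outcome of the random choice, $\alpha(H)\ge \mathbb{E}[\,|S'|\,]\ge pn-p^rm$.

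It then remains to choose $p$ optimally. The function $g(p):=pn-p^rm$ is concave on $[0,\infty)$ with derivative $g'(p)=n-rp^{r-1}m$, which vanishes only at $p^\star=(n/(rm))^{1/(r-1)}$. If $m<n/r$ then $p^\star>1$, so $g$ is increasing on $[0,1]$ and the best admissible choice is $p=1$, yielding $\alpha(H)\ge n-m>n-n/r\ge n/2$, the last step using $r\ge 2$. If $m\ge n/r$ then $p^\star\le1$ is admissible, and substituting $p=p^\star$ while using $(p^\star)^{r-1}m=n/r$ collapses $g(p^\star)$ to $p^\star n(1-\tfrac1r)$, which a one-line simplification rewrites as $\frac{r-1}{r^{r/(r-1)}}\cdot\frac{n}{(m/n)^{1/(r-1)}}$.

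I do not anticipate a genuine obstacle here: the only real moves are to abandon a greedy or extremal attack in favour of ``choose at random, then repair'', and to split on whether the unconstrained optimum $p^\star$ exceeds $1$. The one loose end is the \emph{strict} inequality, which can be obtained either by tracking the (generically positive) slack in the repair step or simply by appealing to the integrality of $\alpha(H)$.
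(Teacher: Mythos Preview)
Your argument is correct and is precisely the classical deletion (alteration) proof due to Spencer. The paper itself does not prove this lemma; it merely quotes it as a known result of Spencer, so there is nothing further to compare. Your handling of the two regimes $m<n/r$ and $m\ge n/r$ via the location of the unconstrained optimiser $p^\star$ is the standard one, and the algebraic simplification to $\frac{r-1}{r^{r/(r-1)}}\cdot\frac{n}{(m/n)^{1/(r-1)}}$ is accurate. The only residual nicety is the strict inequality in the second case, which you flag; one clean way to secure it is to note that if $m\ge n/r\ge 1$ then the random variable $|S|-X$ is not almost surely constant (e.g.\ $S=\varnothing$ occurs with positive probability and gives value $0$, while the expectation is positive), so some outcome strictly exceeds the mean.
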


Lemmas~\ref{triples} and \ref{sudlem} imply our first result. 
\begin{theorem}\label{mainthm}
Let $P$ be a set of $n$ points with at most $\ell$ collinear, for some $\ell \leq O(\sqrt{n})$.
Then $P$ contains a set of $\Omega(\sqrt{\frac{n}{\ln \ell}})$ points in general position.
\end{theorem}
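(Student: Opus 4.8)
The plan is to combine Lemma~\ref{triples} with Spencer's bound (Lemma~\ref{sudlem}) applied to the collinear-triple hypergraph $H(P)$, which is $3$-uniform on $n$ vertices. First I would set $r=3$ and let $m$ denote the number of edges of $H(P)$, i.e.\ the number of collinear triples in $P$. By Lemma~\ref{triples} we have $m \leq c(n^2\ln\ell + \ell^2 n)$. The hypothesis $\ell \leq O(\sqrt{n})$ means $\ell^2 \leq O(n)$, so $\ell^2 n \leq O(n^2)$, and moreover $\ell^2 n \leq O(n^2) \leq O(n^2\ln\ell)$ since $\ell\geq 3$ forces $\ln\ell \geq \ln 3 > 1$. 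Hence in this regime $m \leq O(n^2\ln\ell)$; that is, $m/n \leq O(n\ln\ell)$.

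Next I would feed this into Lemma~\ref{sudlem}. If $m < n/3$ then $\alpha(H(P)) > n/2$, which certainly exceeds $\Omega(\sqrt{n/\ln\ell})$, so that case is trivially fine. Otherwise Lemma~\ref{sudlem} gives
\[
\alpha(H(P)) > \frac{2}{3^{3/2}}\cdot\frac{n}{(m/n)^{1/2}}
\geq \frac{2}{3^{3/2}}\cdot\frac{n}{\sqrt{O(n\ln\ell)}}
= \Omega\!\left(\frac{n}{\sqrt{n\ln\ell}}\right)
= \Omega\!\left(\sqrt{\frac{n}{\ln\ell}}\right).
\]
Since a subset of $P$ is in general position exactly when it is independent in $H(P)$, this $\alpha(H(P))$ points in general position is what we want, completing the proof.

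The computations here are genuinely routine; the only thing that needs a moment's care is verifying that the $\ell^2 n$ term in Lemma~\ref{triples} is absorbed by the $n^2\ln\ell$ term under the hypothesis $\ell \leq O(\sqrt n)$ — this is exactly why the theorem restricts to that range of $\ell$, and it is worth stating explicitly that $\ell\geq 3$ (assumed throughout the paper) gives $\ln\ell \geq \ln 3 > 1$ so the constant works out. The substantive content has already been done: it lives in Lemma~\ref{triples}, whose proof invokes the Szemer\'edi--Trotter theorem, and in Spencer's hypergraph independent-set bound. So there is no real obstacle remaining; the theorem is essentially a one-line corollary once both ingredients are in hand.
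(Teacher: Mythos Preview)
Your proof is correct and follows essentially the same route as the paper: bound the number of collinear triples via Lemma~\ref{triples}, absorb the $\ell^2 n$ term using $\ell \leq O(\sqrt{n})$ to get $m/n \leq O(n\ln\ell)$, then apply Spencer's bound (Lemma~\ref{sudlem}) with $r=3$ to the hypergraph $H(P)$. The paper's proof is slightly terser about the absorption step, but the argument is otherwise identical.
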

\begin{proof} 
Let $m$ be the number of edges in $H(P)$.
By Lemma~\ref{triples},  $m/n \leq b n \ln\ell$ for some constant $b$. 
Now apply Lemma~\ref{sudlem} with $r=3$. 
If $m<n/3$ then $\alpha(H(P))>n/2$, as required. 
Otherwise, 
$$ \alpha(H(P)) > \frac{2n}{3^{3/2}(m/n)^{1/2}}
	\geq \frac{2n}{3^{3/2}\sqrt{b n\ln\ell}}
	= \frac{2}{3\sqrt{3b}}\,\sqrt{\frac{n}{\ln\ell}}.$$
\end{proof}

Theorem~\ref{mainthm} answers, up to a logarithmic factor, a symmetric Ramsey style version of the general position subset selection problem posed by Gowers~\cite{mathoverflow}. 
He asked for the minimum integer $\GP(q)$ such that every set of at least $\GP(q)$ points in the plane contains $q$ collinear points or $q$ points in general position.
Gowers noted that $ \Omega(q^2) \leq \GP(q) \leq O(q^3)$. 
Theorem~\ref{mainthm} with $\ell=q-1$ and $n=\GP(q)$ implies that $\Omega(\sqrt{\GP(q)/\ln (q-1)}) \leq q $ and so $\GP(q) \leq O(q^2 \ln q) $.

The bound $  \GP(q) \geq \Omega(q^2)$ comes from the $q \times q$ grid, which contains no $q+1$ collinear points, and no more than $2q+1$ in general position, since each row can have at most $2$ points.
Determining the maximum number of points in general position in the $q \times q$ grid is known as the \emph{no-three-in-line problem}. 
It was first posed by Dudeney in 1917~\cite{dudeney} -- see~\cite{HJSW-JCTA75} for the best known bound and for more on its history.

As an aside, note that Pach and Sharir~\cite{pachsharir} proved a result somewhat similar to Lemma~\ref{triples} for the number of triples in $P$ determining a fixed angle $\alpha\in (0,\pi)$. Their proof is similar to that of Lemma~\ref{triples} in its use of the Szemeredi--Trotter theorem.
Also, Elekes~\cite{elekes} employed Lemma~\ref{sudlem} to prove a similar result to Theorem~\ref{mainthm} for the problem of finding large subsets with no triple determining a given angle $\alpha\in (0,\pi)$. 
Pach and Sharir and Elekes did not allow the case $\alpha = 0$, that is, collinear triples. 
This may be because their work did not consider the parameter $\ell$, without which the case $\alpha=0$ is exceptional since $P$ could be entirely collinear, and all triples could determine the same angle.

The following lemma of Sudakov~\cite[Proposition 2.3]{sudakov} is a corollary of a result by Duke, Lefmann and R\"odl~\cite{dukelefrod}.
\begin{lemma}[Sudakov~\cite{sudakov}]\label{sudprop}
Let $H$ be a $3$-uniform hypergraph on $n$ vertices with $m$ edges. Let $t \geq \sqrt{m/n}$ and suppose there exists an $\epsilon >0$ such that the number of edges containing any fixed pair of vertices of $H$ is at most $t^{1-\epsilon}$. Then $\alpha(H) \geq \Omega_\epsilon \left( \frac{n}{t}\sqrt{\ln t} \right)$. 
\end{lemma}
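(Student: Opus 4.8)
The plan is to derive Lemma~\ref{sudprop} as the $k=3$ special case of the theorem of Duke, Lefmann and R\"odl~\cite{dukelefrod} on independent sets in \emph{uncrowded} (locally sparse) hypergraphs, which builds on earlier work of Ajtai, Koml\'os, Pintz, Spencer and Szemer\'edi. That theorem asserts, roughly, that a $k$-uniform hypergraph $\mathcal H$ on $N$ vertices with average degree at most $t^{k-1}$ that is uncrowded --- in particular no two edges meet in $k-1$ vertices and there are no short cycles of edges --- satisfies $\alpha(\mathcal H) \ge \Omega_k\!\left(\frac{N}{t}(\ln t)^{1/(k-1)}\right)$. For $k=3$ the average-degree hypothesis reads $m = O(Nt^{2})$, and the one uncrowdedness requirement that is not automatic is a bound on the number of edges through a fixed pair of vertices. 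So the hypothesis of Lemma~\ref{sudprop}, that every pair lies in at most $t^{1-\epsilon}$ edges, is exactly what the Duke--Lefmann--R\"odl argument needs; having the exponent $1-\epsilon$ strictly below $1$ is what keeps the ``wasted'' edge mass in that argument of lower order, and $\epsilon$ enters only the implied constant.

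It is worth noting first that the weaker bound $\alpha(H) = \Omega\!\left(\frac{n}{t}\right)$ already follows from Lemma~\ref{sudlem} with $r=3$ (using $t \ge \sqrt{m/n}$) with no codegree hypothesis at all; the whole content of Lemma~\ref{sudprop} is the extra factor $\sqrt{\ln t}$, and this is precisely the gain that the uncrowdedness buys. To extract it via~\cite{dukelefrod} I would set $N=n$ and observe that $m \le nt^{2}$, so the average degree of $H$ is $O(t^{2})$ as required; the Duke--Lefmann--R\"odl theorem is usually stated with average degree of exactly this order, and if $H$ is sparser one simply pads it with a disjoint uncrowded gadget to bring the average degree up to order $t^{2}$ while changing $\alpha(H)$ by at most a constant factor. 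The codegree hypothesis supplies the uncrowdedness, so the theorem applies and gives $\alpha(H) \ge \Omega_\epsilon\!\left(\frac{n}{t}\sqrt{\ln t}\right)$, as claimed.

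For a self-contained proof the route is the semi-random (R\"odl nibble) method. One iterates a round of the following shape: select a random subset of the surviving vertices, each independently with probability $p=\delta/t$ for a small constant $\delta$; use it to enlarge a growing partial independent set while discarding the vertices that land in a monochromatic edge; and then delete a further controlled number of vertices so as to kill edges that could cause conflicts in later rounds and to restore the uncrowdedness condition. The codegree bound $t^{1-\epsilon}$ is exactly what guarantees that these deletions cost only a lower-order fraction of the vertices and that the degree parameter of the surviving hypergraph shrinks by a controlled amount. Iterating $\Theta(\ln t)$ times drives the degree parameter down to $O(1)$ while the accumulated partial independent set reaches size $\Omega\!\left(\frac{n}{t}\sqrt{\ln t}\right)$, and a final greedy cleanup of the bounded-degree leftover produces a genuine independent set. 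The main obstacle --- and the reason it is cleanest here simply to quote~\cite{dukelefrod} --- is controlling the process across all $\Theta(\ln t)$ rounds: one must show that the relevant random variables (surviving vertices, surviving edges, and the counts of the forbidden sub-configurations) are simultaneously concentrated in every round, so that the multiplicative errors accumulated over the rounds multiply out to only a constant. This concentration-and-bookkeeping step, carried out via martingale or higher-moment estimates, is the technical heart of that paper.
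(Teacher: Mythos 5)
The paper does not actually prove Lemma~\ref{sudprop}: it is quoted from Sudakov~\cite{sudakov}, who obtains it precisely as a corollary of the Duke--Lefmann--R\"odl theorem, so your reduction to that theorem (with the nibble sketch only as background for why it holds) is essentially the same route as the paper's. The one substantive step your sketch glosses over is that the version stated in this paper (Theorem~\ref{DLR}) assumes a \emph{maximum}-degree bound $\Delta(H)\leq t^{k-1}$, whereas the hypothesis $t\geq\sqrt{m/n}$ only bounds the average degree; the standard fix is to discard the at most $n/2$ vertices of degree exceeding a constant multiple of the average and rescale $t$ by a constant (exactly the reduction the paper itself performs when applying Theorem~\ref{DLR} in Section~\ref{gens}), while your padding remark handles only the opposite, too-sparse, direction.
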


Lemmas~\ref{triples} and~\ref{sudprop} can be used to prove our second result. 
\begin{theorem}\label{smallell}
Fix constants $\epsilon>0$ and $d>0$. 
Let $P$ be a set of $n$ points in the plane with at most $\ell$ collinear points, where 
$3\leq \ell\leq (dn)^{(1-\epsilon)/2}$. Then $P$ contains a set of  $\Omega(\sqrt{n \log_\ell n})$ points in general position.
\end{theorem}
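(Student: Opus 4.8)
The plan is to re-run the argument of Theorem~\ref{mainthm}, but with Spencer's bound (Lemma~\ref{sudlem}) replaced by Sudakov's bound (Lemma~\ref{sudprop}), the extra ingredient being the bound on pair-degrees in $H(P)$ that the hypothesis $\ell\le(dn)^{(1-\epsilon)/2}$ makes available. Write $H:=H(P)$ and let $m$ be its number of edges. First I would note that $\ell^2\le(dn)^{1-\epsilon}\le dn$ for $n\ge 1/d$, so Lemma~\ref{triples} gives $m\le c(n^2\ln\ell+\ell^2 n)\le c(1+d/\ln 3)\,n^2\ln\ell$; hence $m/n\le b\,n\ln\ell$ for a constant $b=b(c,d)$, and $\sqrt{m/n}\le\sqrt{b}\sqrt{n\ln\ell}$. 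Accordingly set $t:=\sqrt{b}\sqrt{n\ln\ell}$, which satisfies $t\ge\sqrt{m/n}$ and (for $n$ large) $t>1$.

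Next I would verify the pair-degree hypothesis of Lemma~\ref{sudprop}. Any pair $\{u,v\}\subseteq P$ lies in exactly as many edges of $H$ as there are points of $P\setminus\{u,v\}$ on the line through $u$ and $v$, which is at most $\ell-2<\ell$. So it suffices to exhibit a fixed $\epsilon'>0$ with $\ell\le t^{1-\epsilon'}$. Since $t\ge\sqrt{b}\sqrt{n}$, we have $t^{1-\epsilon'}\ge\Omega\!\left(n^{(1-\epsilon')/2}\right)$, whereas $\ell\le(dn)^{(1-\epsilon)/2}=O\!\left(n^{(1-\epsilon)/2}\right)$; taking $\epsilon':=\epsilon/2$, the exponent $(1-\epsilon)/2$ is strictly smaller than $(1-\epsilon')/2$, so $\ell\le t^{1-\epsilon'}$ holds for all sufficiently large $n$, the finitely many small $n$ being absorbed into the $\Omega$ in the conclusion.

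Lemma~\ref{sudprop} then yields $\alpha(H)\ge\Omega_\epsilon\!\left(\frac{n}{t}\sqrt{\ln t}\right)$. Here $\frac{n}{t}=\Theta\!\left(\sqrt{n/\ln\ell}\right)$, and because $3\le\ell\le O(\sqrt{n})$ we have $\ln\ell\le O(\ln n)$, so $\ln t=\tfrac12\ln n+O(\ln\ln n)=\Theta(\ln n)$ and $\sqrt{\ln t}=\Theta(\sqrt{\ln n})$. Multiplying, $\alpha(H)\ge\Omega\!\left(\sqrt{\tfrac{n\ln n}{\ln\ell}}\right)=\Omega\!\left(\sqrt{n\log_\ell n}\right)$. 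Since a subset of $P$ is in general position exactly when it is independent in $H(P)$, this is the claimed bound.

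I expect the only real obstacle to be the bookkeeping in the middle step: one must choose $\epsilon'$ in terms of the given $\epsilon$ (and keep track of the constant $d$) so that the codegree bound $\ell<t^{1-\epsilon'}$ follows cleanly from $\ell\le(dn)^{(1-\epsilon)/2}$, and likewise confirm that the $\ell^2n$ term in Lemma~\ref{triples} is dominated by $n^2\ln\ell$ under the same hypothesis. Everything else is a direct substitution into Lemmas~\ref{triples} and~\ref{sudprop}, mirroring the proof of Theorem~\ref{mainthm}.
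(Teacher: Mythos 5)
Your proposal is correct and follows essentially the same route as the paper: bound the number of collinear triples via Lemma~\ref{triples}, note that each pair lies in fewer than $\ell$ edges, choose $t=\Theta(\sqrt{n\ln\ell})$ so that $\ell\le t^{1-\epsilon'}$ follows from $\ell\le(dn)^{(1-\epsilon)/2}$, and apply Lemma~\ref{sudprop}. The only (harmless) difference is cosmetic: the paper picks $t$ so that $\ell<t^{1-\epsilon}$ holds directly with the given $\epsilon$, whereas you take $\epsilon'=\epsilon/2$ and absorb finitely many small $n$ into the $\Omega$.
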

\begin{proof}
Let $m$ be the number of edges in $H(P)$. 
By Lemma~\ref{triples}, for some constant $c\geq 1$, 
$$m\leq 
c\ell^2n+ cn^2\ln\ell
<
cdn^2+ c n^2\ln\ell
\leq
(d+1)c n^2\ln\ell.$$ 
Define $t:= \sqrt{(d+1)cn\ln\ell}$. 
Thus $t \geq \sqrt{m/n}$. 
Each pair of vertices in $H$ is in less than $\ell$ edges of $H$, and 
$$\ell\leq (dn)^{(1-\epsilon)/2} < ((d+1)c n \ln \ell)^{(1-\epsilon)/2}=t^{1-\epsilon}.$$
Thus the assumptions in  Lemma~\ref{sudprop} are satisfied. 
So $H$ contains an independent set of size 
$\Omega(\frac{n}{t}\,\sqrt{\ln t})$. 
Moreover, 
\begin{align*}
\frac{n}{t}\,\sqrt{\ln t} 
&=
\sqrt{\frac{n}{(d+1)c\ln\ell}}\;\sqrt{\ln \sqrt{(d+1)cn\ln\ell}} \\
&\geq 
 \sqrt{\frac{n}{(d+1)c\ln\ell}}\;\sqrt{\frac{1}{2}\ln n}\\
&= 
 \sqrt{\frac{1}{2(d+1)c}}\;
  \sqrt{\frac{n\ln n}{\ln\ell}}\\
&= \Omega(\sqrt{n\log_\ell n}).
\end{align*}
Thus $P$ contains a subset of
$\Omega(\sqrt{n \log_\ell n})$ points in general position. 
\end{proof}

\section{Generalisations}\label{gens}

In this section we consider the function $f(n,\ell,k)$ defined to be the maximum integer such that every set of $n$ points in the plane with at most $\ell$ collinear contains a subset of $f(n,\ell,k)$ points with at most $k$ collinear, where $k < \ell$.

Bra\ss~\cite{brass} considered this question for fixed $\ell=k+1$, and showed that $$ o(n) \geq f(n,k+1, k) \geq \Omega_k(n^{(k-1)/k}(\ln n)^{1/k}).$$
This can be seen as a generalisation of the results of F\"uredi~\cite{furedi} for $f(n,3,2)$. 
As in F\"uredi's work, the lower bound comes from a result on partial Steiner systems~\cite{rodsin}, and the upper bound comes from the density Hales--Jewett theorem~\cite{furstkatz2}.
Lefmann~\cite{lefmann} further generalised these results for fixed $\ell$ and $k$ by showing that $$f(n,\ell,k) \geq \Omega_{\ell,k}(n^{(k-1)/k}(\ln n)^{1/k}).$$
The density Hales--Jewett theorem also implies the general bound $f(n,\ell,k) \leq o(n)$.


The result of Lefmann may be generalised to include the dependence of $f(n,\ell,k)$ on $\ell$ for fixed $k\geq3$, analogously to Theorems~\ref{mainthm} and \ref{smallell} for $k=2$.
The first result we need is a generalisation of Lemma~\ref{triples}. It is proved in the same way.
\begin{lemma}\label{ktuples}
Let $P$ be a set of $n$ points in the plane with at most $\ell$ collinear.
Then, for $k\geq 4$, the number of collinear $k$-tuples in $P$ is at most $c(\ell^{k-3}n^2 + l^{k-1}n)$ for some absolute constant $c$. \qed
\end{lemma}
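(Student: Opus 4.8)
The plan is to run the proof of Lemma~\ref{triples} essentially verbatim, with the exponent $3$ replaced by $k$. For $2\le i\le\ell$ let $s_i$ be the number of lines containing exactly $i$ points of $P$; then the number of collinear $k$-tuples in $P$ equals $\sum_{i=k}^\ell\binom ik s_i$, and the corollary of the Szemer\'edi--Trotter theorem used in the proof of Lemma~\ref{triples} still supplies $\sum_{j\ge i}s_j\le c(n^2/i^3+n/i)$ for every $i\ge2$, with the same absolute constant $c$.

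The main (and essentially only) point that is not pure copying is the telescoping bound. In place of ``$\binom i3 s_i\le i^2\sum_{j\ge i}s_j$'' I would use
\[
\sum_{i=k}^\ell\binom ik s_i\;\le\;\sum_{i=2}^\ell i^{k-1}\sum_{j=i}^\ell s_j ,
\]
which, after swapping the order of summation on the right, reduces to the pointwise inequality $\binom jk\le\sum_{i=2}^j i^{k-1}$ for all $j\ge k$ (for $2\le j<k$ both sides of the swapped estimate behave fine since $\binom jk=0$ and $s_j\ge0$). This pointwise inequality is elementary: $\binom jk\le j^k/k!$, while $\sum_{i=1}^j i^{k-1}\ge j^k/k$, and one checks that $j^k/k\ge 1+j^k/k!$ whenever $j\ge k\ge4$. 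Substituting the Szemer\'edi--Trotter estimate into the right-hand side then yields $c\sum_{i=2}^\ell\bigl(n^2 i^{k-4}+n\,i^{k-2}\bigr)$.

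To finish, since $k\ge4$ both exponents $k-4$ and $k-2$ are nonnegative, so each power sum is at most its number of terms times its largest term: $\sum_{i=2}^\ell i^{k-4}\le\ell\cdot\ell^{k-4}=\ell^{k-3}$ and $\sum_{i=2}^\ell i^{k-2}\le\ell\cdot\ell^{k-2}=\ell^{k-1}$, giving the claimed bound $c(\ell^{k-3}n^2+\ell^{k-1}n)$ with $c$ still absolute. I do not expect a genuine obstacle: the work is bookkeeping, and the only things needing a moment's care are the binomial/power-sum inequality above and verifying that bounding the power sums termwise (rather than by an integral) produces exactly the exponents $\ell^{k-3}$ and $\ell^{k-1}$ and keeps the constant absolute. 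It is precisely this last step that forces the hypothesis $k\ge4$: for $k=3$ the sum $\sum_{i=2}^\ell i^{k-4}$ is the harmonic sum $\sum_{i=2}^\ell i^{-1}$, which is what produces the $\ln\ell$ factor in Lemma~\ref{triples}, whereas for $k\ge4$ it is an ordinary polynomial sum with no logarithmic loss.
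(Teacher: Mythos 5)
Your proposal is correct and is essentially the argument the paper intends: the paper omits the proof of Lemma~\ref{ktuples}, stating only that it is proved in the same way as Lemma~\ref{triples}, and your adaptation (replacing $\binom{i}{3}\le i^2\sum_{j\ge i}s_j$ by the pointwise bound $\binom{j}{k}\le\sum_{i=2}^{j}i^{k-1}$, then bounding the resulting power sums termwise) carries that through correctly, with the constant remaining the absolute one from the Szemer\'edi--Trotter corollary. Your closing remark about why $k\ge 4$ avoids the logarithmic term is also accurate.
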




Lemmas~\ref{sudlem} and \ref{ktuples} imply the following theorem which is proved in the same way as Theorem~\ref{mainthm}.
\begin{theorem}\label{maink}
If $k\geq 3$ is fixed and $\ell \leq O(\sqrt{n})$, then $f(n,\ell,k) \geq \Omega_k\left(\frac{n^{(k-1)/k}}{\ell^{(k-2)/k}}\right)$.  \qed
\end{theorem}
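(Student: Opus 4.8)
The plan is to combine the generalised triple-counting bound (Lemma~\ref{ktuples}) with Spencer's independent set bound (Lemma~\ref{sudlem}), exactly as in the proof of Theorem~\ref{mainthm}, but now working with the $k$-uniform hypergraph $H_k(P)$ whose vertex set is $P$ and whose edges are the collinear $k$-tuples. A subset of $P$ has at most $k-1$ collinear points if and only if it is independent in $H_k(P)$; since we want ``at most $k$ collinear'', we should in fact apply this with $k+1$ in place of $k$, i.e.\ work with the $(k+1)$-uniform hypergraph of collinear $(k+1)$-tuples, so that independence corresponds precisely to having no $k+1$ collinear points, which is the same as at most $k$ collinear. (This index shift is the sort of routine bookkeeping I would pin down carefully at the start.)

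First I would set $r = k+1$ and let $m$ be the number of edges of $H_r(P)$, i.e.\ the number of collinear $r$-tuples. By Lemma~\ref{ktuples} (applied with $k$ replaced by $r=k+1\geq 4$), we have $m \leq c(\ell^{r-3}n^2 + \ell^{r-1}n)$ for an absolute constant $c$. Since $\ell \leq O(\sqrt n)$, the first term dominates: $\ell^{r-1}n = \ell^{r-3}\cdot \ell^2 \cdot n \leq O(\ell^{r-3} n^2)$, so $m \leq O(\ell^{r-3}n^2)$ and hence $m/n \leq O(\ell^{r-3}n)$. Next I would apply Lemma~\ref{sudlem} with this $r$. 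If $m < n/r$ then $\alpha(H_r(P)) > n/2$, which is far more than the claimed bound (here one checks the stated bound $n^{(k-1)/k}/\ell^{(k-2)/k}$ is $O(n)$, using $\ell\geq 3$). Otherwise,
\begin{equation*}
\alpha(H_r(P)) > \frac{r-1}{r^{r/(r-1)}}\cdot\frac{n}{(m/n)^{1/(r-1)}} \geq \Omega_k\!\left(\frac{n}{(\ell^{r-3}n)^{1/(r-1)}}\right) = \Omega_k\!\left(\frac{n^{1-1/(r-1)}}{\ell^{(r-3)/(r-1)}}\right).
\end{equation*}

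Finally I would simplify the exponents: with $r = k+1$ we have $r-1 = k$, so $1 - 1/(r-1) = (k-1)/k$ and $(r-3)/(r-1) = (k-2)/k$, giving $\alpha(H_r(P)) \geq \Omega_k\big(n^{(k-1)/k}/\ell^{(k-2)/k}\big)$, which is exactly the claimed bound on $f(n,\ell,k)$. The main thing to watch is the index shift between ``at most $k$ collinear'' and the uniformity of the hypergraph, together with checking that the $\ell^{r-1}n$ term in Lemma~\ref{ktuples} really is absorbed under the hypothesis $\ell \leq O(\sqrt n)$ — both are routine but are the only places an off-by-one or a misplaced constant could creep in. No genuinely new idea is needed beyond what was used for Theorem~\ref{mainthm}; this is why the paper states it can be ``proved in the same way.''
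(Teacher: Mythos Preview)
Your proposal is correct and matches the paper's intended approach exactly: the paper states that Theorem~\ref{maink} follows from Lemmas~\ref{sudlem} and~\ref{ktuples} ``in the same way as Theorem~\ref{mainthm},'' and your argument---working with the $(k+1)$-uniform hypergraph of collinear $(k+1)$-tuples, bounding its edge count via Lemma~\ref{ktuples}, absorbing the $\ell^{r-1}n$ term using $\ell\le O(\sqrt{n})$, and applying Spencer's bound---is precisely that. Your handling of the index shift $r=k+1$ and the exponent simplification are both correct.
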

For $\ell = \sqrt{n}$, Theorem~\ref{maink} implies $f(n,\sqrt{n},k) \geq \Omega_k\left(\frac{n^{(k-1)/k}}{n^{(k-2)/2k}} \right) = \Omega_k\left(n^{(2k-2 - k+2)/2k} \right) = \Omega_k(\sqrt{n})$. For $k\geq 3$ this answers completely a generalised version of Gowers' question~\cite{mathoverflow}, namely, to determine the minimum integer $\GP_k(q)$ such that every set of at least $\GP_k(q)$ points in the plane contains $q$ collinear points or $q$ points with at most $k$ collinear.
Thus $\GP_k(q) \leq O(q^2)$. 
The bound $\GP_k(q) \geq \Omega(q^2)$ comes from the following construction.
Let $m := \floor{ (q-1) / k }$
and let $P$ be the $m \times m$ grid.
Then $P$ has at most $m$ points collinear, and $m<q$. 
If $S$ is a subset of $P$ with at most $k$ collinear, then $S$ has at
most $k$ points in each row. 
So $|S|  \leq km \leq q-1$.

Theorem~\ref{smallell} can be generalised using Lemma~\ref{ktuples} and a theorem of Duke, Lefmann and R\"odl~\cite{dukelefrod} (the one that implies Lemma~\ref{sudprop}).
\begin{theorem}[Duke, Lefmann and R\"odl~\cite{dukelefrod}]\label{DLR}
Let $H$ be a $k$-uniform hypergraph with maximum degree $\Delta(H) \leq t^{k-1}$ where $t\gg k$.
Let $p_j(H)$ be the number of pairs of edges of $H$ sharing exactly $j$ vertices.
If $ p_j(H) \leq n t^{2k-j-1-\gamma}$ for $j = 2, \dots , k-1$ and some $\gamma >0$, then $\alpha(H) \geq \Omega_{k,\gamma}\left( \frac{n}{t}(\ln n)^{1/(k-1)}\right)$.
\end{theorem}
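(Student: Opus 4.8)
The plan is to reduce the statement to a cleaner \emph{uncrowded} case and then settle that case by the semi-random method. Call a hypergraph \emph{uncrowded} if any two of its edges share at most one vertex. The core engine I would establish first is the following analogue of the Ajtai--Koml\'os--Pintz--Spencer--Szemer\'edi theorem: if $G$ is $k$-uniform and uncrowded on $N$ vertices with $\Delta(G)\le s^{k-1}$, then $\alpha(G)=\Omega_k\big(\frac{N}{s}(\ln s)^{1/(k-1)}\big)$. The hypotheses on $p_j(H)$ in Theorem~\ref{DLR} are precisely what is needed to pass from $H$ to a large uncrowded subhypergraph without destroying the governing degree parameter $t$, so the bulk of the content is transferred onto this engine.

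For the reduction I would retain each vertex of $H$ independently with probability $q\in(0,1)$ to form the induced subhypergraph $H'=H[V']$. Then $\mathbb{E}|V'|=qn$; conditioned on surviving, each vertex has expected degree $\le q^{k-1}t^{k-1}=(qt)^{k-1}$; and a pair of edges sharing $j$ vertices spans $2k-j$ vertices, so survives with probability $q^{2k-j}$, whence the expected number of surviving pairs sharing $j\ge2$ vertices is at most $q^{2k-j}p_j\le q^{2k-j}nt^{2k-j-1-\gamma}$. Writing $t'=qt$ and deleting one shared vertex from each surviving pair with $j\ge2$ makes $H'$ uncrowded at a cost of at most $\sum_{j=2}^{k-1}q^{2k-j}nt^{2k-j-1-\gamma}$ vertices. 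This is a negligible fraction of $qn$ provided $(t')^{2k-j-1}\ll t^{\gamma}$ for each $j$; the binding case $j=2$ forces $t'\lesssim t^{\gamma/(2k-3)}$, so I would set $q=t^{\gamma/(2k-3)-1}$, which uses $\gamma>0$ essentially. A standard alteration argument then yields an uncrowded subhypergraph on $\ge qn/2$ vertices with maximum degree $\le (t')^{k-1}$ (capping the few vertices of larger degree, controlled by Markov's inequality). Applying the engine with $s=t'$ gives $\alpha(H)\ge\alpha(H')=\Omega_k\big(\frac{qn}{t'}(\ln t')^{1/(k-1)}\big)$. The key point is that $\frac{qn}{t'}=\frac{qn}{qt}=\frac{n}{t}$ is independent of $q$, while $\ln t'=\frac{\gamma}{2k-3}\ln t=\Theta_{k,\gamma}(\ln t)$; since $\ln t=\Theta(\ln n)$ in the polynomial regime of interest (as in Theorems~\ref{smallell} and~\ref{maink}), this delivers the claimed $\Omega_{k,\gamma}\big(\frac{n}{t}(\ln n)^{1/(k-1)}\big)$.

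The main obstacle is the engine itself. I would prove it by the R\"odl nibble: run $\Theta(\ln t')$ rounds, and in each round include every current vertex in a random set $R$ with a small probability $p$, then discard every vertex lying in an edge whose other $k-1$ vertices also fell in $R$; the survivors form an independent set that is added to the output, after which these vertices together with those they ``block'' are deleted and the process recurses. The delicate part is tracking the vertex count and the maximum degree through the rounds: one must show that with positive probability the degree parameter shrinks by a constant factor each round while a predictable fraction of vertices survives, so that after $\Theta(\ln t')$ rounds the accumulated independent set has size $\Omega(\frac{N}{t'})$ multiplied by the extra $(\ln t')^{1/(k-1)}$ gain. Uncrowdedness is exactly what keeps the number of conflicts small enough to make these estimates concentrate, since it bounds how many edges two chosen vertices can simultaneously lie in; handling this concentration is where essentially all of the difficulty resides.
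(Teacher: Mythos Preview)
The paper does not prove Theorem~\ref{DLR}; it is quoted as a result of Duke, Lefmann and R\"odl~\cite{dukelefrod} and used as a black box. So there is no proof in this paper to compare your proposal against.

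That said, your outline is essentially the standard argument (and the one in~\cite{dukelefrod}): randomly sparsify to control the number of $2$-cycles, delete a vertex from each surviving pair of edges sharing $j\ge 2$ vertices to obtain an uncrowded (linear) subhypergraph, then invoke the AKPSS-type nibble bound for uncrowded hypergraphs. Your parameter choice $q=t^{\gamma/(2k-3)-1}$ and the observation that $qn/t'=n/t$ are exactly right, and the nibble sketch captures the genuine difficulty. One small point: your engine and reduction naturally yield a $(\ln t')^{1/(k-1)}=\Theta_{k,\gamma}((\ln t)^{1/(k-1)})$ factor, and you pass to $(\ln n)^{1/(k-1)}$ by asserting $\ln t=\Theta(\ln n)$ ``in the polynomial regime of interest''. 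This is fine for the applications in the paper, but it is not a hypothesis of Theorem~\ref{DLR} as stated; strictly speaking the nibble gives $(\ln t)^{1/(k-1)}$, and one should either note that the stated form assumes $t$ is at least a fixed power of $n$, or handle small $t$ separately.
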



\begin{theorem}
Fix constants $d>0$ and $\epsilon \in (0,1)$. If $k\geq 3$ is fixed and $4 \leq \ell \leq dn^{(1-\epsilon)/2}$ then $$f(n,\ell,k) \geq \Omega_k\left(\frac{n^{(k-1)/k}}{\ell^{(k-2)/k}}(\ln n)^{1/k}\right).$$
\end{theorem}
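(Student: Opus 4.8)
The plan is to mimic the proof of Theorem~\ref{smallell}, replacing the $3$-uniform machinery (Lemma~\ref{triples} and Lemma~\ref{sudprop}) with their $k$-uniform analogues (Lemma~\ref{ktuples} and Theorem~\ref{DLR}). Let $H = H_k(P)$ be the $k$-uniform hypergraph on vertex set $P$ whose edges are the collinear $k$-tuples of $P$; an independent set in $H$ is exactly a subset of $P$ with at most $k-1 < k$ collinear, which is more than enough (it has at most $k$ collinear). By Lemma~\ref{ktuples} the number of edges is $m \leq c(\ell^{k-3}n^2 + \ell^{k-1}n)$, and since $\ell \leq dn^{(1-\epsilon)/2}$ we have $\ell^{k-1}n \leq d^{k-1} n^{(k-1)/2}\cdot n = d^{k-1}\ell^{k-3}n^2 \cdot (\ell^2/n)^{?}$; more simply $\ell^{k-1}n = \ell^{k-3}n \cdot \ell^2 \leq \ell^{k-3}n \cdot d^2 n^{1-\epsilon} \leq d^2 \ell^{k-3} n^2$, so $m = O_k(\ell^{k-3}n^2)$.

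Next I would choose the scaling parameter $t$ for Theorem~\ref{DLR}. The natural choice is $t := \big(C \ell^{k-3} n\big)^{1/(k-1)}$ for a suitable constant $C = C(k,d,\epsilon)$, chosen so that $\Delta(H) \leq t^{k-1}$; here I use that the maximum degree of $H$ (the number of edges through a fixed vertex) is $O_k(\ell^{k-3}n)$, which follows from Lemma~\ref{ktuples} applied with one point fixed, or directly from the Szemer\'edi--Trotter-type incidence bound used to prove it. Then I must verify the pair-degeneracy hypotheses: for each $j = 2,\dots,k-1$, the number $p_j(H)$ of pairs of collinear $k$-tuples sharing exactly $j$ points is at most $n\, t^{2k-j-1-\gamma}$ for some fixed $\gamma > 0$. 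Two collinear $k$-tuples sharing $j \geq 2$ points lie on a common line, so $p_j(H)$ is bounded by $\sum_i s_i \binom{i}{j}\binom{i-j}{k-j}^2 = O_k\!\big(\sum_i s_i\, i^{2k-j}\big)$, and the Szemer\'edi--Trotter corollary $\sum_{i'\geq i} s_{i'} \leq c(n^2/i^3 + n/i)$ gives $\sum_i s_i\, i^{2k-j} = O_k(\ell^{2k-j-3}n^2 + \ell^{2k-j-1}n)$. One then checks that this is $\leq n\, t^{2k-j-1-\gamma}$ for small enough $\gamma$: substituting $t^{k-1} = C\ell^{k-3}n$, the dominant term $\ell^{2k-j-3}n^2$ must be compared with $n\cdot(\ell^{k-3}n)^{(2k-j-1-\gamma)/(k-1)}$, and since $\ell \leq dn^{(1-\epsilon)/2}$ the exponent of $n$ on the left is strictly smaller once $\gamma$ is chosen depending on $\epsilon$ and $k$ — this is exactly where the condition $\ell \leq dn^{(1-\epsilon)/2}$ is used, just as in Theorem~\ref{smallell}. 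Finally, with $t \geq \sqrt{m/n}$ automatically satisfied, Theorem~\ref{DLR} yields $\alpha(H) \geq \Omega_{k,\gamma}\big(\tfrac{n}{t}(\ln n)^{1/(k-1)}\big)$, and since $\tfrac{n}{t} = \Theta_k\big(n / (\ell^{k-3}n)^{1/(k-1)}\big) = \Theta_k\big(n^{(k-2)/(k-1)} / \ell^{(k-3)/(k-1)}\big)$, one is left to reconcile the stated exponents.

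The one genuinely delicate point is the exponent bookkeeping, and in particular the gap between the exponents appearing naturally ($(k-2)/(k-1)$ and $(k-3)/(k-1)$, with a $(\ln n)^{1/(k-1)}$ factor) and those in the statement ($(k-1)/k$ and $(k-2)/k$, with $(\ln n)^{1/k}$). Resolving this requires iterating Theorem~\ref{DLR} once more on the induced subhypergraph — a standard device (the Duke--Lefmann--R\"odl and Spencer-type arguments can be bootstrapped): apply the bound, pass to the independent set found, observe it is still a point set with at most $\ell$ collinear, and reapply. Alternatively, and more cleanly, one applies Theorem~\ref{DLR} not to the full hypergraph but after a random sampling / weighting step that effectively lowers the uniformity contribution, or one uses the sharper form of the Duke--Lefmann--R\"odl theorem that already delivers the $(k-1)/k$ exponent for linear-type hypergraphs. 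I expect the main obstacle to be precisely this matching of exponents and the verification that the $p_j$ conditions survive after the iteration; the geometric input (counting pairs of collinear $k$-tuples on a common line via Szemer\'edi--Trotter) is routine and parallels Lemma~\ref{ktuples} exactly. Once the exponents line up, combining with the trivial greedy bound $f(n,\ell,k) \geq \Omega(\sqrt{n/\ell})$-type estimate handles any small-$m$ edge cases, completing the proof.
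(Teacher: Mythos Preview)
Your proof has a genuine gap at the very first step: you work with the $k$-uniform hypergraph $H_k(P)$ of collinear $k$-tuples, but an independent set there is a subset with at most $k-1$ collinear, not at most $k$. That is a stronger condition than required, and it is exactly what causes the exponent mismatch you noticed. Carrying your argument through yields
\[
\alpha(H_k(P)) \;\geq\; \Omega_k\!\left(\frac{n^{(k-2)/(k-1)}}{\ell^{(k-3)/(k-1)}}(\ln n)^{1/(k-1)}\right),
\]
and a short computation shows this is smaller than the claimed bound by a factor of order $(n/\ell^{2})^{1/(k(k-1))}\geq \Omega(n^{\epsilon/(k(k-1))})$, so it is polynomially weaker. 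Your suggested remedies (iterating, bootstrapping, or invoking a ``sharper DLR'') do not recover this loss: once you have a set with $\leq k-1$ collinear you cannot enlarge it, and re-applying the same lemma to a subset only shrinks things further.

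The paper's fix is simply to shift the uniformity by one: take $H_{k+1}(P)$, the $(k+1)$-uniform hypergraph of collinear $(k+1)$-tuples, so that independent sets correspond \emph{exactly} to subsets with at most $k$ collinear. Lemma~\ref{ktuples} (with $k+1$ in place of $k$) gives $m\leq c(n^{2}\ell^{k-2}+n\ell^{k})=O_k(n^{2}\ell^{k-2})$, and after a standard vertex-deletion step to control the maximum degree one sets $t=\Theta_k\big((n\ell^{k-2})^{1/k}\big)$. Applying Theorem~\ref{DLR} at uniformity $k+1$ then produces $(n/t)(\ln t)^{1/k}$, which is precisely $\Omega_k\big(n^{(k-1)/k}\ell^{-(k-2)/k}(\ln n)^{1/k}\big)$. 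The $p_j$ estimates and the use of $\ell\leq dn^{(1-\epsilon)/2}$ to get $\ell\leq t^{1-\epsilon'}$ are handled much as you sketched, but no iteration or sharpening of DLR is needed once the hypergraph has the right uniformity.
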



\begin{proof}
Given a set $P$ of $n$ points with at most $\ell$ collinear, a subset with at most $k$ collinear points corresponds to an independent set in the $(k+1)$-uniform hypergraph $H_{k+1}(P)$ of collinear $(k+1)$-tuples in $P$. 
By Lemma~\ref{ktuples}, the number of edges in  $H_{k+1}(P)$ is $ m \leq c(n^2\ell^{k-2} + nl^{k})$ for some constant $c$. 
The first term dominates since $\ell \leq o(\sqrt{n})$.
For $n$ large enough, $ m/n \leq 2c n\ell^{k-2}$.

To limit the maximum degree of $H_{k+1}(P)$, discard vertices of degree greater than $2(k+1)m/n$.
Let $\tilde{n}$ be the number of such vertices.
Considering the sum of degrees, $(k+1)m \geq \tilde{n}2(k+1)m/n$, and so $\tilde{n} \leq n/2$.
Thus discarding these vertices yields a new point set $P'$ such that $|P'|\geq n/2$ and $\Delta(H_{k+1}(P')) \leq 4(k+1)c n\ell^{k-2}$.
Note that an independent set in $H_{k+1}(P')$ is also independent in $H_{k+1}(P)$.

Set $t:= (4(k+1)cn\ell^{k-2})^{1/k}$, so $m\leq \frac{1}{2(k+1)}nt^k$ and $\Delta(H_{k+1}(P')) \leq t^k$, as required for Theorem~\ref{DLR}.
By assumption, $\ell \leq dn^{(1-\epsilon)/2}$.
Thus 
$$\ell \leq d \left( \frac{t^k\ell^{2-k}}{4(k+1)c} \right)^{\frac{1-\epsilon}{2}}.$$ 
Hence $\ell^{\frac{2}{1-\epsilon} +k-2}  \leq \frac{d^{2/(1-\epsilon)}t^k}{4(k+1)c}$, 
implying $\ell  \leq C_1(k)t^{\frac{k}{\frac{2}{1-\epsilon} +k-2}} 
 = C_1(k) t^{\frac{1-\epsilon}{1-\epsilon +\frac{2k}{\epsilon} }}$
for some constant $C_1(k)$.
Define $\epsilon' := 1- \frac{1-\epsilon}{1-\epsilon +\frac{2k}{\epsilon} }$, so $\epsilon' >0 $ (since $\epsilon <1$) and $\ell \leq C_1(k)t^{1-\epsilon'}$. 
To bound $p_j(H_{k+1}(P'))$ for $j=2, \dots, k$, first choose one edge (which determines a line), then choose the subset to be shared, then choose points from the line to complete the second edge of the pair. Thus for $\gamma := \epsilon'/2$ and sufficiently large $n$,
\begin{align*}
p_j(H_{k+1}(P')) & \leq m \binom{k+1}{j} \binom{\ell -k-1}{k+1-j} \\
& \leq C_2(k)nt^k \ell^{k+1-j} \\
& \leq C_2(k)(C_1(k))^{k+1-j}nt^kt^{(1-\epsilon')(k+1-j)} \\
& \leq nt^{2(k+1)-j-1-\gamma}.
\end{align*}
Hence the second requirement of Theorem~\ref{DLR} is satisfied.
%
Thus
\begin{align*}
\alpha(H_{k+1}(P')) & \geq \Omega_{k,\epsilon}\left( \frac{n}{t}(\ln t)^{1/k}\right) \\
& \geq \Omega_{k,\epsilon}\left(\frac{n^{(k-1)/k}}{\ell^{(k-2)/k}}\left(\ln( (n\ell^{k-2})^{1/k} )\right)^{1/k}\right) \\
& \geq \Omega_{k,\epsilon}\left(\frac{n^{(k-1)/k}}{\ell^{(k-2)/k}}(\ln n)^{1/k}\right).
\end{align*}\end{proof}


\section{Conjectures}

Theorem~\ref{maink} suggests the following conjecture, which would completely answer Gowers' question~\cite{mathoverflow}, showing that $\GP(q)=\Theta(q^2)$.
It is true for the $\sqrt{n}\times\sqrt{n}$ grid \cite{HJSW-JCTA75,Erdos51}. 
\begin{conjecture}\label{nolog}
$f(n, \sqrt{n}) \geq \Omega(\sqrt{n})$.
\end{conjecture}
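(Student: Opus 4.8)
To prove Conjecture~\ref{nolog} the plan is to recover the factor $\sqrt{\ln\ell}$ that is lost in Theorem~\ref{mainthm} (and which at $\ell=\sqrt n$ amounts to $\Theta(\sqrt{\log n})$) by a sharper treatment of the hypergraph $H(P)$, rather than by extracting new geometry from $P$. Two observations frame the problem. First, Spencer's Lemma~\ref{sudlem} really is tight for $3$-uniform hypergraphs with $\Theta(n^2\log n)$ edges: a disjoint union of complete $3$-uniform hypergraphs on $\Theta(\sqrt{n\log n})$ vertices has independence number $\Theta(\sqrt{n/\log n})$. But such a hypergraph can only be realised by a point set containing $\Theta(\sqrt{n\log n})\gg\sqrt n$ collinear points, which the hypothesis $\ell\le\sqrt n$ forbids --- so $\ell\le\sqrt n$ must be used, and the obvious way it enters $H(P)$ is through the codegree: a line carrying $i$ points of $P$ puts each of its pairs into $i-2$ collinear triples, so $H(P)$ has codegree at most $\ell-2\le\sqrt n$. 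Second, Lemma~\ref{sudprop} (essentially the case $k=3$ of Theorem~\ref{DLR}) converts exactly this kind of codegree bound into the desired conclusion: at $\ell=\sqrt n$, Lemma~\ref{triples} gives $m:=|E(H(P))|=O(n^2\log n)$, so with $t:=\sqrt{m/n}=O(\sqrt{n\log n})$ the estimate $\alpha(H(P))=\Omega\bigl(\frac{n}{t}\sqrt{\ln t}\bigr)=\Omega(\sqrt n)$ of Lemma~\ref{sudprop} is precisely the conjecture. The obstruction is that Lemma~\ref{sudprop} demands codegree $O(t^{1-\epsilon})$, whereas $\sqrt n\approx t/\sqrt{\log n}$ exceeds $t^{1-\epsilon}$ for every fixed $\epsilon>0$ --- this is exactly why Theorem~\ref{smallell} stops just short of $\ell=\sqrt n$.

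The first step is to reduce to the hard instances. If $P$ has no line with more than $n^{(1-\epsilon)/2}$ points then Theorem~\ref{smallell}, applied with this maximum collinearity in place of $\ell$, already gives $\Omega(\sqrt{n\log_\ell n})=\Omega(\sqrt n)$, since $\log_\ell n=\Theta(1)$ in that range; and if $H(P)$ had only $n^{2-\delta}$ edges then Spencer's bound alone would give $\Omega(\sqrt n)$. So one may assume $P$ has a line with between $n^{(1-\epsilon)/2}$ and $\sqrt n$ points and that $m=n^{2-o(1)}$. From here there are two routes. \emph{The combinatorial route:} prove a version of Theorem~\ref{DLR} in which the codegree hypothesis is relaxed from $t^{1-\epsilon}$ to $t/(\log t)^{O(1)}$; this would be self-contained and would settle the conjecture outright --- but it may well be false, since it asks to beat Spencer's bound using only a polylogarithmic saving in the codegree. \emph{The geometric route:} run the semi-random (nibble) argument behind Theorem~\ref{DLR} directly on $H(P)$, using the Szemer\'edi--Trotter input of Lemma~\ref{triples} to show that at each nibble step the pairs whose degree fails to drop at the generic rate are supported on only a few rich lines, then delete all but two points of each such line (which costs nothing towards a general-position set) and continue, arranging that the total deletion is $o(\sqrt n)$.

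The main obstacle is that it is unclear whether the purely combinatorial relaxation of Theorem~\ref{DLR} holds, and if it does not, one must pin down which geometric quantity --- beyond the scale-by-scale count of rich lines furnished by Lemma~\ref{triples}, perhaps something describing how those lines are arranged relative to one another --- actually drives the nibble; this is where the real work lies. Since the statement is a conjecture, the above is a programme rather than a completed argument. Natural first milestones are to settle the combinatorial codegree-relaxation question in isolation, and to carry out the geometric route on near-grid configurations, where the conjecture is already known for the exact $\sqrt{n}\times\sqrt{n}$ grid (Erd\H{o}s~\cite{Erdos51}, \cite{HJSW-JCTA75}), so as to identify the structural feature that the argument is really exploiting. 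A proof along these lines would yield $\GP(q)=\Theta(q^2)$, answering Gowers' question~\cite{mathoverflow}.
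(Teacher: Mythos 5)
You have not proved the statement, and you say so yourself: what you give is a programme, with the decisive step explicitly left open. To be fair, the paper does not prove it either --- this is stated as Conjecture~\ref{nolog}, supported only by the fact that it holds for the $\sqrt{n}\times\sqrt{n}$ grid and by the observation that Conjecture~\ref{colours} would imply it --- so there is no paper proof to compare against. Your diagnosis of the bottleneck is accurate and worth recording: at $\ell=\sqrt{n}$ Lemma~\ref{triples} gives $m=O(n^2\ln n)$, Spencer's bound (Lemma~\ref{sudlem}) is tight at this density for abstract $3$-uniform hypergraphs (your disjoint-cliques example, which needs $\Theta(\sqrt{n\ln n})$ collinear points and is therefore excluded geometrically), and the only handle the hypothesis $\ell\le\sqrt{n}$ gives on $H(P)$ is the codegree bound $\ell-2$, which at $\ell=\sqrt{n}$ sits at $t/\sqrt{\ln t}$, just outside the $t^{1-\epsilon}$ window required by Lemma~\ref{sudprop} and Theorem~\ref{DLR}. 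That is exactly why Theorem~\ref{smallell} stops short of $\ell=\sqrt{n}$, and your reduction to the regime $\ell\ge n^{(1-\epsilon)/2}$, $m=n^{2-o(1)}$ is correct.

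The genuine gap is that neither of your two routes is carried out, and each contains a step that might simply fail. The ``combinatorial route'' needs a strengthening of Theorem~\ref{DLR} in which the codegree hypothesis $t^{1-\epsilon}$ is relaxed to $t/(\ln t)^{O(1)}$ while retaining the full $(\ln n)^{1/(k-1)}$ gain over Spencer; you correctly flag that this may be false, and no evidence is offered either way, so the conjecture cannot rest on it. The ``geometric route'' (running the nibble on $H(P)$ and using Szemer\'edi--Trotter to control the pairs of abnormally high codegree) is only a sketch: the claim that the bad pairs are supported on few rich lines, and that deleting all but two points of each such line costs only $o(\sqrt{n})$ while keeping the nibble's degree and codegree statistics on track, is precisely the missing argument --- note also that Lemma~\ref{triples} already encodes essentially all the Szemer\'edi--Trotter information scale by scale, so it is not clear the nibble sees anything beyond what Lemma~\ref{sudprop} already used. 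As a problem analysis your proposal is sound and aligns with the paper's own view of why the $\sqrt{\ln n}$ factor is hard to remove; as a proof of Conjecture~\ref{nolog} it establishes nothing beyond what Theorems~\ref{mainthm} and~\ref{smallell} already give.
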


A natural variation of the general position subset selection problem is to colour the points of $P$ with as few colours as possible, such that each colour class is in general position. 
An easy application of the Lov\'asz Local Lemma shows that under this requirement, $n$ points with at most $\ell$ collinear are colourable with $O(\sqrt{\ell n})$ colours. 
The following conjecture would imply Conjecture~\ref{nolog}.
It is also true for the $\sqrt{n}\times\sqrt{n}$ grid \cite{Wood04}.

\begin{conjecture}\label{colours}
Every set $P$ of $n$ points in the plane with at most $\sqrt{n}$ collinear  
can be coloured with $O(\sqrt{n})$ colours such that each colour class is in general position.
\end{conjecture}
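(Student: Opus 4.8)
The plan is to recast the statement as a hypergraph colouring problem and attack it with the semi-random (R\"odl nibble) method, using the same geometric inputs that drive Lemma~\ref{triples}. A colouring of $P$ in which every colour class is in general position is exactly a colouring of the $3$-uniform hypergraph $H(P)$ of collinear triples in which no edge is monochromatic; equivalently, it is a colouring of the points in which every line receives at most two points of each colour. So the goal is to show $\chi(H(P))=O(\sqrt n)$ when $P$ has at most $\sqrt n$ collinear. Two geometric parameters control the problem. First, every pair of points lies on a unique line carrying at most $\ell=\sqrt n$ points, so the codegree of $H(P)$ (the number of edges through a fixed pair) is at most $\ell-2<\sqrt n$. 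Second, by Lemma~\ref{triples} with $\ell=\sqrt n$ the number of edges is $m=O(n^2\ln n)$, so the average degree is $O(n\ln n)$, although the maximum degree may be as large as $\Theta(\ell n)=\Theta(n^{3/2})$, achieved when a single point lies on $\sqrt n$ rich lines.

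First I would try the direct probabilistic route. A uniform random colouring with $q$ colours makes each triple monochromatic with probability $q^{-2}$, and the Lov\'asz Local Lemma needs $q=\Omega(\sqrt{\ell n})=\Omega(n^{3/4})$, which only reproduces the bound quoted before the conjecture. To beat this one must exploit that the hypergraph is far sparser on average than its maximum degree suggests. The natural tool is the semi-random method: colour a small nibble of points at each round, uncolour any point that would create a monochromatic triple, and iterate. The small codegree $\sqrt n$ is exactly the hypothesis needed to keep the conflict structure under control between rounds, in the spirit of the Duke--Lefmann--R\"odl analysis underlying Theorem~\ref{DLR}; the average degree $O(n\ln n)$ is what would make $q=O(\sqrt n)$ the target, since $\sqrt{(n\ln n)/\ln(n\ln n)}=O(\sqrt n)$. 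An essentially equivalent route is iterated extraction: repeatedly remove a general-position subset of size $\Omega(\sqrt{n'})$ from the $n'$ remaining points and assign it a fresh colour. Because each step removes $\Omega(\sqrt{n'})$ points while $n'=\Omega(n)$, a geometric-series accounting shows $O(\sqrt n)$ colours suffice; this reduces the conjecture to a repeatable form of Conjecture~\ref{nolog}.

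I expect the main obstacle to be precisely the long lines at the boundary $\ell=\sqrt n$. The maximum degree being a full factor $\sqrt{\ell}=n^{1/4}$ above the average means the few points lying on many rich lines cannot be handled by a degree-based argument; and, more seriously, the codegree bound $\ell=\sqrt n$ coincides up to the logarithm with the threshold $\sqrt{m/n}=\Theta(\sqrt{n\ln n})$, so the $t^{1-\epsilon}$ codegree hypothesis of Lemma~\ref{sudprop} and Theorem~\ref{DLR} just fails. This is the same degeneracy that confines Theorem~\ref{smallell} to $\ell\le n^{(1-\epsilon)/2}$: a single line with $\sqrt n$ points contributes $\binom{\sqrt n}{2}=\Theta(n)$ triples all sharing that line, so the error terms in a nibble lose exactly the $\sqrt{\ln t}$ factor one is trying to win. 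The way through, I believe, is to stop treating rich lines as generic edges and instead use the Szemeredi--Trotter theorem quantitatively: there are only $O(\sqrt n)$ lines with $\Theta(\sqrt n)$ points, and more generally $O(n^2/i^3)$ lines with at least $i$ points. One can therefore first fix a good partial colouring on the rich lines, where the constraint is simply that each colour appears at most twice per line (solvable greedily, or by the grid-type argument of \cite{Wood04}), and then extend it to the sparse remainder by the semi-random method, whose hypotheses are now genuinely met because the surviving codegrees fall below the threshold. Making the interface between these two regimes lossless is the crux, and is what keeps the statement a conjecture.
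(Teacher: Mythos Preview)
This statement is Conjecture~\ref{colours} in the paper and is presented there as an \emph{open} conjecture; the paper does not prove it, so there is no proof to compare your proposal against. The nearest result the paper establishes is the weaker Proposition~\ref{colprop}, giving $O(\sqrt{n}\,\ln^{3/2}n)$ colours. You appear to be aware of this, since your closing sentence says that making the two-regime interface lossless ``is what keeps the statement a conjecture''; in other words, what you have written is a research outline, not a proof.

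As an outline it is accurate. Your iterated-extraction idea is exactly the mechanism of Proposition~\ref{colprop}: the paper repeatedly strips off a largest general-position subset, but can only invoke Lemma~\ref{sudlem} to extract $\Omega(n_i/\sqrt{n\ln n})$ points per round, which yields $O(\sqrt{n}\,\ln^{3/2}n)$ colours. Reaching $O(\sqrt{n})$ by this route would require an extraction bound of order $\sqrt{n_i}$ at every step, and that is genuinely stronger than Conjecture~\ref{nolog}, because the surviving set $P_i$ still has up to $\sqrt{n}$ collinear rather than $\sqrt{n_i}$. Your diagnosis of the obstruction also matches the paper's: the codegree bound $\ell=\sqrt{n}$ sits exactly at the threshold $t^{1-\epsilon}$ in Lemma~\ref{sudprop} and Theorem~\ref{DLR}, which is precisely why Theorem~\ref{smallell} is confined to $\ell\le n^{(1-\epsilon)/2}$. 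The two-regime plan you sketch (handle the $O(\sqrt{n})$ richest lines directly using the Szemer\'edi--Trotter counts and the grid argument of~\cite{Wood04}, then run a nibble on the sparse remainder) is a reasonable direction, but it remains a plan; the paper contains no argument along these lines, and the conjecture stands open.
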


The following proposition is somewhat weaker than Conjecture~\ref{colours}.

\begin{proposition}\label{colprop}
Every set $P$ of $n$ points in the plane with at most $\sqrt{n}$ collinear  
can be coloured with $O(\sqrt{n} \ln^{3/2} n)$ colours such that each colour class is in general position.
\end{proposition}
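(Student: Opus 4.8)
The plan is to colour $P$ greedily: from the points that remain, extract a largest subset in general position, assign it a new colour, delete it, and repeat until nothing is left. The engine is a lower bound, \emph{uniform over all subsets}, on the largest general-position subset, obtained by feeding Lemma~\ref{triples} into Spencer's bound (Lemma~\ref{sudlem}) for the hypergraph of collinear triples. Counting the iterations then yields $O(\sqrt{n}\,\ln^{3/2}n)$ colours.

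First I would prove the following claim: there is a constant $c_0>0$ such that for every $Q\subseteq P$ with $|Q|=n'$, the set $Q$ contains at least $c_0\,n'/\sqrt{n\ln n}$ points in general position, provided $n$ is large. Since $Q$ has at most $\sqrt n$ collinear points, Lemma~\ref{triples} bounds the number of collinear triples in $Q$ by $c\bigl((n')^2\ln\sqrt n+n\,n'\bigr)\le c'\,n'n\ln n$, using $(n')^2\le n'n$; so the hypergraph $H(Q)$ of collinear triples has average degree $m/n'\le c'\,n\ln n$. Applying Lemma~\ref{sudlem} with $r=3$: if $m<n'/3$ then $\alpha(H(Q))>n'/2$, and otherwise $\alpha(H(Q))>\frac{2n'}{3^{3/2}\sqrt{m/n'}}\ge\frac{2}{3^{3/2}\sqrt{c'}}\cdot\frac{n'}{\sqrt{n\ln n}}$; in both cases this is at least $c_0\,n'/\sqrt{n\ln n}$ for suitable $c_0$ and large $n$. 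A subset in general position is exactly an independent set in $H(Q)$, so the claim follows.

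Next I would count colours. Let $n_0=n>n_1>\cdots$ record the number of points remaining after successive removals (each removed set being, and remaining, a subset of $P$, so the claim applies throughout). As long as $n_i\ge c_0^{-1}\sqrt{n\ln n}$, a step removes at least $c_0 n_i/\sqrt{n\ln n}\ge 1$ points, so $n_{i+1}\le n_i\bigl(1-c_0/\sqrt{n\ln n}\bigr)$, giving $n_i\le n\exp\bigl(-i\,c_0/\sqrt{n\ln n}\bigr)$; this falls below $c_0^{-1}\sqrt{n\ln n}$ after $O\bigl(\sqrt{n\ln n}\cdot\ln n\bigr)=O(\sqrt{n}\,\ln^{3/2}n)$ steps. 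At that stage at most $c_0^{-1}\sqrt{n\ln n}$ points remain, and giving each its own colour costs a further $O(\sqrt{n\ln n})=O(\sqrt{n}\,\ln^{1/2}n)$ colours. In total $O(\sqrt{n}\,\ln^{3/2}n)$ colours suffice, and every colour class is in general position by construction.

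The argument is routine once Lemma~\ref{triples} and Lemma~\ref{sudlem} are in hand; the one point of care is that the independence bound must be a fixed fraction of $n'$ with the \emph{original} $n$ inside the logarithm and square root, not $n'$ — otherwise the greedy process stalls after a constant fraction of the points has been deleted (this is precisely why one cannot simply invoke Theorem~\ref{mainthm}, whose hypothesis $\ell\le O(\sqrt{n'})$ fails for the shrinking point set). The factor $\ln^{3/2}n$ rather than $\ln n$ is slack from this crude uniform bound, which ignores the much larger general-position subsets present while $|Q|$ is close to $n$; a finer multi-phase analysis would save a factor of $\sqrt{\ln n}$, but the bound above already proves the proposition and, for $\ell=\sqrt n$, improves on the $O(n^{3/4})$ bound from the Local Lemma. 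This also shows the proposition is weaker than Conjecture~\ref{colours} only by a polylogarithmic factor.
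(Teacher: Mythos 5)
Your proposal is correct and follows essentially the same route as the paper: greedy extraction of a largest general-position subset, a uniform independence bound from Lemma~\ref{triples} plus Lemma~\ref{sudlem} of the form $\Omega(n_i/\sqrt{n\ln n})$ (with the original $n$ inside, exactly as the paper does), and a geometric-decay count of the iterations. The only cosmetic difference is that you stop the recursion early and colour the $O(\sqrt{n\ln n})$ leftover points individually, whereas the paper runs the same recurrence until at most one point remains; both yield $O(\sqrt{n}\,\ln^{3/2}n)$ colours.
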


\begin{proof}
Colour $P$ by iteratively selecting a largest subset in general position and giving it a new colour.
Let $P_0 := P$.
Let $C_i$ be a largest subset of $P_i$ in general position and let $P_{i+1} := P_i \setminus C_i$.
Define $n_i := |P_i|$.
Applying Lemma~\ref{triples} to $P_i$ shows that $H(P_i)$ has $O(n_i^2\ln\ell + \ell^2n_i)$ edges.
Thus the average degree of $H(P_i)$ is at most $O(n_i\ln\ell + \ell^2)$ which is $O(n\ln n)$ since $n_i\leq n$ and $\ell \leq \sqrt{n}$.

Applying Lemma~\ref{sudlem} gives $|C_i| = \alpha(H(P_i)) > cn_i/ \sqrt{n \ln n}$ for some constant $c>0$. 
Thus $n_i \leq n(1-c/\sqrt{n \ln n})^i$.
It is well known (and not difficult to show) that if a sequence of numbers $m_i$ satisfies $m_i \leq m(1-1/x)^i$ for some $x>1$ 
and if $j > x\ln m$,  then $m_j \leq 1$.
Hence if $k \geq \sqrt{n \ln n} \ln n /c$ then $n_k \leq 1$, so the number of colours used is $O(\sqrt{n} \ln^{3/2} n)$.
\end{proof}

The problem of determining the correct asymptotics of $f(n,\ell)$ (and $f(n,\ell,k)$) for fixed $\ell$ remains wide open. 
The Szemeredi--Trotter theorem 
is essentially tight for the $\sqrt{n}\times\sqrt{n}$ grid~\cite{pachtoth}, 
but says nothing for point sets with bounded collinearities.
For this reason, the lower bounds on $f(n,\ell)$ for fixed $\ell$ remain essentially combinatorial. 
Finding a way to bring geometric information to bear in this situation is an interesting challenge.

\begin{conjecture}\label{hard}
If $\ell$ is fixed, then 
$f(n,\ell) \geq \Omega_{\ell}(n/\polylog(n))$.
\end{conjecture}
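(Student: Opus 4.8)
Since Conjecture~\ref{hard} is wide open, what follows is a plan of attack rather than a proof. The first move I would make is to reduce to the colouring version: if every set of $n$ points with at most $\ell$ collinear can be partitioned into $\polylog(n)$ classes, each in general position, then pigeonhole yields a class of size $\Omega_\ell(n/\polylog(n))$, which is the conjectured bound. This is the fixed-$\ell$ analogue of Conjecture~\ref{colours}, and it looks more hopeful than the $\ell=\sqrt n$ case precisely because every line is now short: it meets a class in at most two points, so short rich lines create no local obstruction, in contrast to the $\sqrt n$-rich lines of the grid.

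Two natural attacks fail, for reasons worth isolating. Iteratively peeling off a largest general-position subset, as in Proposition~\ref{colprop}, removes only $\Omega(n_i/\sqrt{n\log n})$ points at step $i$, because Lemma~\ref{triples} together with Spencer's Lemma~\ref{sudlem} caps $\alpha(H(P_i))$ at $O(\sqrt{n\log n})$; to do better one would need general-position subsets of size $n_i/\polylog(n_i)$ at each step, which is the conjecture. A $\polylog$-size random palette is equally hopeless: for fixed $\ell$ the point set can have $\Theta(n^2)$ collinear triples --- for instance $n$ points on a cubic curve with $\sim n^2/6$ three-point lines and no four collinear --- so the expected number of monochromatic triples under a uniform $q$-colouring is $\Theta(n^2/q^2)$, not $o(1)$ unless $q\gg n$, and the Lov\'asz Local Lemma is far from applicable. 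So the large subsets must be \emph{structured}, not random, and any argument must beat the $\Theta(\sqrt{n\log n})$ independence number that a generic $3$-uniform hypergraph with codegree at most $\ell-2$ and $\Theta(n^2)$ edges (e.g.\ a random near-Steiner system) already realises. In other words, the geometry of $H(P)$, not just its edge count and codegree, has to be used in an essential way --- exactly the point made in~\cite{BMP} and echoed in the paper's closing remarks on the tightness of Szemer\'edi--Trotter for the grid.

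The most promising geometric handle seems to be a directional, Beck-type decomposition. Since a point set with at most $\ell$ collinear determines $\Omega(n^2)$ lines (Beck's theorem), one can sort the $\geq 3$-rich lines by direction; within a single direction the lines are parallel, hence disjoint, and a structured general-position subset is easy to describe there. The plan would then be: (i) either find $\polylog(n)$ directions that between them control most of the rich lines, or recursively cut the plane along a few lines so that on each cell the surviving incidence structure is a near-partial-Steiner system of sharply reduced density; (ii) choose a structured general-position set on each cell or direction class; and (iii) patch the pieces, discarding only a $1-1/\polylog(n)$ fraction to kill the collinear triples that straddle two pieces. I expect step~(iii), combined with the fact that there may be $\Theta(n^2)$ relevant directions, to be the real obstacle: no one currently knows how to decompose a bounded-collinearity point set into $\polylog(n)$ ``collinearity-simple'' pieces, nor any other way to certify that its collinearity hypergraph is vastly more independent than an unstructured hypergraph with the same parameters. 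A sensible warm-up would be to settle the conjecture when $P$ lies on a bounded-degree algebraic curve, or is the union of $\polylog(n)$ sets in convex position, where the line--point incidences are rigidly constrained.
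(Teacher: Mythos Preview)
There is nothing to compare against: Conjecture~\ref{hard} is stated in the paper as an open conjecture, and the paper offers no proof. You correctly recognise this and present a plan rather than a proof, so there is no gap to flag.

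Your reduction to the colouring formulation is exactly the paper's Conjecture~\ref{hardcol}, and the paper explicitly asserts (without details) that Conjectures~\ref{hard} and~\ref{hardcol} are equivalent ``by an argument similar to that of Proposition~\ref{colprop}''. You give only the easy direction (colouring $\Rightarrow$ large subset via pigeonhole); the converse is the Proposition~\ref{colprop}-style peeling argument you also sketch. One piece of evidence the paper records that you omit: Conjecture~\ref{hardcol} is verified for the projected lattice $[\ell]^d$, the very point set that supplies the $o(n)$ upper bound, via an explicit $O(d^{\ell-1})$-colouring by ``signature''. That is worth folding into your discussion, since it handles the known extremal example.

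The remainder of your write-up --- the failure analysis for greedy peeling and random colouring, the cubic-curve example showing $\Theta(n^2)$ collinear triples with $\ell=3$, and the Beck-type directional decomposition plan --- goes well beyond anything the paper says and reads as reasonable commentary on why the problem is hard. One phrasing nit: saying Lemmas~\ref{triples} and~\ref{sudlem} ``cap $\alpha(H(P_i))$ at $O(\sqrt{n\log n})$'' is backwards, since Spencer's lemma gives a \emph{lower} bound; what you mean is that the method only \emph{certifies} $\alpha\geq\Omega(\sqrt{n\log n})$, which is too weak.
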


The point set that gives the upper bound $f(n,\ell) \leq o(n)$ (from the density Hales--Jewett theorem) is the generic projection to the plane of 
the ${\floor{\log_\ell n}}$-dimensional $\ell \times \ell \times \dots \times \ell$ integer lattice (henceforth $[\ell]^d$ where $d := \floor{\log_\ell(n)}$).
The problem of finding large general position subsets in this point set for $\ell=3$ 
is known as Moser's cube problem~\cite{moser, polymath}, and the best known asymptotic lower bound is $\Omega(n / \sqrt{\ln n})$~\cite{chvatal, polymath}.

In the colouring setting, the following conjecture is equivalent to 
Conjecture~\ref{hard} by an argument similar to that of Proposition~\ref{colprop}.

\begin{conjecture}\label{hardcol}
For all fixed $\ell\geq3$, every  set of $n$ points in the plane with at most $\ell$ collinear  
can be coloured with $O_\ell(\polylog(n))$ colours such that each colour class is in general position.
\end{conjecture}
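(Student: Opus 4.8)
The plan is to attack the bound directly by the semi-random (``nibble'') method, rather than by repeatedly extracting one huge general-position subset. First I would restate the goal combinatorially: a colour class is in general position exactly when it meets every line in at most two points, so the task is to partition $P$ into $O_\ell(\polylog n)$ classes, each of which is $2$-bounded on every line. Equivalently, writing $H=H(P)$ for the $3$-uniform hypergraph of collinear triples, I want $\chi(H)=O_\ell(\polylog n)$, where a ``colour class'' means an independent set in $H$. The structure I would exploit is that every edge of $H$ is confined to a line carrying at most $\ell$ points, so the local constraints are short: on each line only $\binom{|L|}{3}\leq\binom{\ell}{3}$ triples can go wrong, and the whole obstruction to a colour class is the event that three same-coloured points fall on one line.

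Second, I would run an iterative colouring over several rounds from a fixed palette of size $C=O_\ell(\polylog n)$. In each round every still-uncoloured point picks a colour uniformly at random, and I then \emph{uncolour} any point that becomes the third point of its colour on some line. The heart of the argument is to show that, round after round, a constant fraction of the uncoloured points keep their colour while the surviving configuration stays well-behaved, so that after $O(\log n)$ rounds the set of uncoloured points is empty, or small enough to finish off with the Lov\'asz Local Lemma exactly as in the one-shot $O(\sqrt{\ell n})$ bound. I would control the probability that a given point survives a round with Talagrand's inequality or a bounded-differences estimate, and use the sparsification provided by Lemma~\ref{triples} (the total number of triples is $O_\ell(n^2)$) to bound the average number of conflicts created.

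The hard part --- and the reason the conjecture is still open --- is precisely the step where one wants each point to survive a round with good probability. A point $p$ may lie on $r_p=\Omega(n)$ lines that each carry three or more points of $P$; with a palette of only polylogarithmic size, the expected number of lines on which $p$ becomes a forbidden third point of its colour is of order $r_p/C^2$, which is $\gg 1$ as soon as $C$ is sub-polynomial. Thus the independence heuristic underlying the nibble breaks down: the constraints at a single vertex are far too numerous, and this is exactly the barrier that the $O(\sqrt{\ell n})$ Local Lemma bound cannot cross. Overcoming it seems to require genuinely geometric input --- a structural theorem guaranteeing that a set with at most $\ell$ collinear cannot have too many points each incident to too many rich lines, or a partition of $P$ into ``line-sparse'' pieces on which the nibble really does behave independently. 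Since the Szemer\'edi--Trotter theorem is tight for the grid and says nothing for bounded-collinearity sets, supplying this missing ingredient is the crux; it is the same obstruction met by the companion subset problem, and indeed a direct proof of the large-subset bound of Conjecture~\ref{hard}, fed into the greedy iteration of Proposition~\ref{colprop}, would already deliver the colouring bound.
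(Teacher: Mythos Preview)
This statement is a \emph{conjecture}; the paper does not prove it, and you do not prove it either. Your write-up is not a proof proposal but a candid discussion of why the natural semi-random approach stalls: you correctly identify that with a palette of only $\polylog n$ colours, a point lying on $\Omega(n)$ rich lines has expected number of conflicts $\gg 1$, so the nibble heuristic breaks down. That analysis is sound, and your concluding remark --- that a proof of Conjecture~\ref{hard} fed through the greedy iteration of Proposition~\ref{colprop} would already yield the colouring bound --- matches exactly what the paper asserts (it states that Conjectures~\ref{hard} and~\ref{hardcol} are equivalent via that argument).

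So there is no proof to compare. The only additional content the paper offers about Conjecture~\ref{hardcol} is (i) the equivalence just mentioned, and (ii) a verification for the specific configuration $[\ell]^d$, coloured by the ``signature vector'' $(|\{i:x_i=1\}|,\dots,|\{i:x_i=\ell\}|)$, which uses $O(d^{\ell-1})=O_\ell(\polylog n)$ colours. If you want your write-up to align with the paper's discussion, you could add that example; otherwise your text is an accurate account of why the problem is open, not a proof.
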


Conjecture~\ref{hardcol} is true for $[\ell]^d$,
 which can be coloured with $O(d^{\ell-1})$ 
colours as follows. 
For each $x \in [\ell]^d$, define a signature vector in $\Z^\ell$ whose entries are the number of entries in $x$ equal to $1, 2, \dots \ell$.
The number of such signatures is the number of partitions of $d$ into at most $\ell$ parts, which is $O(d^{\ell-1})$.
Give each set of points with the same signature its own colour.
To see that this is a proper colouring, suppose that $\{a,b,c\}\subset [\ell]^d$ is a monochromatic collinear triple, with $b$ between $a$ and $c$.
Permute the coordinates so that the entries of $b$ are non-decreasing.
Consider the first coordinate $i$ in which $a_i$, $b_i$ and $c_i$ are not all equal.
Then without loss of generality, $a_i<b_i$.
But this implies that $a$ has more entries equal to $a_i$ than $b$ does, contradicting the assumption that the signatures are equal.


\subsection*{Acknowledgements}
Thanks to Timothy Gowers for posting his motivating problem on MathOverflow~\cite{mathoverflow}.
Thanks to Moritz Schmitt and Louis Theran for interesting discussions, and for making us aware of reference~\cite{mathoverflow}.
Thanks to J\'{a}nos Pach for pointing out references \cite{lefmann} and \cite{pachsharir}.

\bibliographystyle{myNatbibStyle}
\bibliography{references}

\end{document}